\newtheorem{theorem}{Theorem}[section]
\newtheorem{lemma}[theorem]{Lemma}
\newtheorem{corollary}[theorem]{Corollary}
\newtheorem{question}{Question}[section]
\newcommand{\R}{\ensuremath{\mathbb{R}}}
\newcommand{\Z}{\ensuremath{\mathbb{Z}}}
\renewcommand{\epsilon}{\varepsilon}
\renewcommand{\vec}[1]{\ensuremath{\mathbf{#1}}}
\def\imod#1{\allowbreak\mkern8mu({\operator@font mod}\,\,#1)}
\newcommand{\lat}{\mathcal{L}}
\DeclareMathOperator{\vol}{vol}
\DeclarePairedDelimiter\inner{\langle}{\rangle}
\DeclarePairedDelimiter\set{\{}{\}}
\DeclarePairedDelimiter\length{\lVert}{\rVert}
\begin{document}

\title{A Note on Koldobsky's Lattice Slicing Inequality}
\author{
Oded Regev\thanks{Courant Institute of Mathematical Sciences, New York
 University. Supported by the Simons Collaboration on Algorithms and Geometry and by the National Science Foundation (NSF) under Grant No.~CCF-1320188. Any opinions, findings, and conclusions or recommendations expressed in this material are those of the authors and do not necessarily reflect the views of the NSF.}
%\texttt{regev@cims.nyu.edu}
}
\date{}
\maketitle

%%%% DON'T REMOVE %%%%%
%\noteswarning
%%%% DON'T REMOVE %%%%%

%\thispagestyle{empty}
%
%\newpage

\begin{abstract}
We show that if $K \subset \R^d$ is an origin-symmetric convex body, then there exists a vector $\vec{y} \in \Z^d$ such that 
\begin{align*}
|K \cap \Z^d \cap \vec{y}^\perp| / |K \cap \Z^d|
\ge 
\min(1,c \cdot d^{-1} \cdot \vol(K)^{-1/(d-1)}) \; ,
\end{align*}
for some absolute constant $c>0$, where $\vec{y}^\perp$ denotes the subspace orthogonal to $\vec{y}$. This gives a partial answer to a question by Koldobsky.
\end{abstract}

\section{Introduction}

The following question was asked by Koldobsky 
during the 2013 AIM workshop on ``Sections of convex bodies.''

\begin{question}
Is it the case that for all $d \ge 1$ there exists an $\alpha=\alpha(d)>0$ satisfying the following:
for all origin-symmetric convex bodies $K \subset \R^d$ such that ${\rm span}(K \cap \Z^d) = \R^d$
there exists a nonzero $\vec{y} \in \R^d$ such that
\[
|K \cap \Z^d \cap \vec{y}^\perp| / |K \cap \Z^d| \ge \alpha \cdot \vol_d(K)^{-1/d} \; .
\]
\end{question}
\noindent
In other words, the question asks to find a dimension $d-1$ subspace that contains at least an $\alpha \cdot \vol_d(K)^{-1/d}$ fraction of the lattice points 
in $K$. 
The requirement on the span is in order to avoid degenerate cases of bodies of very small volume that would force $\alpha$ to be small. 

Alexander, Henk, and Zvavitch~\cite{AlexanderHZ15} gave a positive answer to this question by showing that one can 
take $\alpha = C^{-d}$ for some absolute constant $C$. 
They also showed that for the special case of unconditional bodies $K$ one can take $\alpha = c/d$ for some absolute constant $c>0$,
and observed that this is tight (as follows by taking $K$ to be the cross-polytope ${\rm conv}(\pm \vec{e}_1,\ldots,\pm \vec{e}_d)$).
It remains an open question whether one can take $\alpha = c/d$ for general bodies. 
In this note we show that this is the case for bodies whose volume is at most $C^{d^2}$ for any constant $C>0$ (see Theorem~\ref{thm:main} for the full statement).
We refer to~\cite{AlexanderHZ15} for further background on Koldobsky's question and its connection to the slicing problem of Bourgain.

\subsubsection*{Acknowledgements } I am grateful to Assaf Naor for suggesting that I look at Koldobsky's question. 

\section{Orthogonal lattice points}

We use the convention that $c$ is an arbitrary absolute positive constant which might differ from one
occurrence to the next. 
A \emph{lattice} $\lat \subset \R^d$ is defined as the set of
all integer linear combinations of $d$ linearly independent vectors 
in $\R^d$.  
The \emph{dual lattice} of $\lat$ is $\lat^* = \set{\vec{y} \in
\R^d \,:\, \inner{\vec{y},\vec{x}} \in \Z, \forall \vec{x} \in \lat}$. 
For any $s>0$, we define the function $\rho_s : \R^n \rightarrow\R$ as $\rho_s(\vec{x}) = \exp(-\pi \length{\vec{x}}^2/s^2)$. For a discrete set $A \subset \R^n$ we define $\rho_s(A)=\sum_{\vec{x}\in A} \rho_s(\vec{x})$ and denote by $D_{A,s}$ the probability distribution assigning mass $\rho_s(\vec{x})/\rho_s(A)$ to any $\vec{x} \in A$. 
Recalling that the Fourier transform of $\rho_s(\cdot)$ is $s^d \rho_{1/s}(\cdot)$, the following is an immediate application of the Poisson summation formula. 

\begin{lemma}\label{lem:origin}
For any lattice $\lat \subset \R^d$ and $s>0$, 
\[
\rho_s(\lat) = (\det \lat)^{-1} s^d \cdot \rho_{1/s}(\lat^*) \; .
\]
In particular,
\[
\rho_s(\lat) \ge (\det \lat)^{-1} \cdot s^d \; ,
\]
or equivalently, 
\[
\Pr_{\vec{y} \sim D_{\lat,s}}[\vec{y}=0] \le (\det \lat) \cdot s^{-d} \; .
\]
\end{lemma}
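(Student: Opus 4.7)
The plan is to prove the three displayed statements in order, since each one feeds into the next. The first identity is essentially a direct application of Poisson summation, and the remaining two are elementary consequences.

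First I would recall the Poisson summation formula for a lattice $\lat \subset \R^d$ of covolume $\det \lat$: for any sufficiently nice (say, Schwartz) function $f : \R^d \to \R$,
\[
\sum_{\vec{x} \in \lat} f(\vec{x}) \;=\; \frac{1}{\det \lat} \sum_{\vec{y} \in \lat^*} \widehat{f}(\vec{y}) \; .
\]
Apply this with $f = \rho_s$. Using the stated fact that $\widehat{\rho_s} = s^d \rho_{1/s}$ yields
\[
\rho_s(\lat) \;=\; \frac{s^d}{\det \lat} \sum_{\vec{y} \in \lat^*} \rho_{1/s}(\vec{y}) \;=\; (\det \lat)^{-1} s^d \cdot \rho_{1/s}(\lat^*) \; ,
\]
which is the first identity. (The function $\rho_s$ is Schwartz, so no convergence issue arises.)

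Next, since $\rho_{1/s}$ is pointwise nonnegative and $0 \in \lat^*$, we have $\rho_{1/s}(\lat^*) \ge \rho_{1/s}(0) = 1$. Substituting this lower bound into the identity yields the second inequality $\rho_s(\lat) \ge (\det \lat)^{-1} s^d$. Finally, by the definition of $D_{\lat,s}$,
\[
\Pr_{\vec{y} \sim D_{\lat,s}}[\vec{y} = 0] \;=\; \frac{\rho_s(0)}{\rho_s(\lat)} \;=\; \frac{1}{\rho_s(\lat)} \;\le\; (\det \lat) \cdot s^{-d} \; ,
\]
which is the third statement.

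There is no real obstacle here: the only nontrivial input is the Fourier transform identity for the Gaussian, which is assumed in the text just before the lemma, and Poisson summation itself. If one wanted to be fully self-contained one could also verify the transform computation by noting that $\rho_s$ factors as a product of one-dimensional Gaussians and recalling that the Fourier transform of $e^{-\pi t^2/s^2}$ is $s \cdot e^{-\pi s^2 \xi^2}$, but this is standard and I would just cite it.
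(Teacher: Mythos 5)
Your proof is correct and follows exactly the route the paper intends: the paper presents this lemma as an immediate consequence of Poisson summation combined with the Fourier transform of the Gaussian, and your argument (lower-bounding $\rho_{1/s}(\lat^*)$ by the term at $0$, then rewriting the probability as $1/\rho_s(\lat)$) fills in the elementary steps the paper leaves implicit.
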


The following is another easy corollary of the Poisson summation formula (already
used in~\cite{banaszczyk}), and holds because $\rho_s$ is a positive definite function, i.e., a 
function with a non-negative Fourier transform. 

\begin{lemma}\label{lem:translations}
For any lattice $\lat \subset \R^d$, $s>0$, and $\vec{x} \in \R^d$,
\[
\rho_s(\lat + \vec{x}) \le \rho_s(\lat) \; .
\]
\end{lemma}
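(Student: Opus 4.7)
The plan is to derive Lemma~\ref{lem:translations} as a direct consequence of the Poisson summation formula, exploiting the fact that the Fourier transform of $\rho_s$ is the non-negative function $s^d \rho_{1/s}$. Concretely, I would apply Poisson summation to the shifted Gaussian $\rho_s(\cdot + \vec{x})$, whose Fourier transform at $\vec{w}$ is $s^d \rho_{1/s}(\vec{w}) e^{-2\pi i \inner{\vec{w},\vec{x}}}$ (up to sign convention). This gives the identity
\[
\rho_s(\lat + \vec{x}) \;=\; \frac{s^d}{\det \lat} \sum_{\vec{w} \in \lat^*} \rho_{1/s}(\vec{w}) \, e^{-2\pi i \inner{\vec{w},\vec{x}}} \; .
\]

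From here, the key observation is that the left-hand side is real, so only the real parts on the right contribute; pairing each $\vec{w} \in \lat^*$ with $-\vec{w}$ (using $\rho_{1/s}(-\vec{w}) = \rho_{1/s}(\vec{w})$) turns the complex exponentials into cosines. Since $\rho_{1/s}(\vec{w}) \ge 0$ and $\cos(2\pi\inner{\vec{w},\vec{x}}) \le 1$, we can bound every term by its value at $\vec{x}=0$, obtaining
\[
\rho_s(\lat + \vec{x}) \;\le\; \frac{s^d}{\det \lat} \sum_{\vec{w} \in \lat^*} \rho_{1/s}(\vec{w}) \; .
\]
The right-hand side is precisely $\rho_s(\lat)$ by Lemma~\ref{lem:origin}, which finishes the proof.

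There is essentially no hard step here: the only thing to check is that Poisson summation applies, which is immediate because $\rho_s$ is a Schwartz function, and that one is allowed to drop imaginary parts, which follows because the left-hand side is manifestly real. This is exactly the content of the remark preceding the lemma that $\rho_s$ is a positive definite function, since for any positive definite function $f$ on $\R^d$ and any shift $\vec{x}$, the diagonalization of $f$ via its Fourier transform gives $|\hat f(\vec{w}) e^{2\pi i\inner{\vec{w},\vec{x}}}| = \hat f(\vec{w})$, so the periodized sum is maximized at $\vec{x}=0$.
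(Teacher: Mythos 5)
Your proof is correct and is exactly the argument the paper has in mind: the paper gives no explicit proof but attributes the lemma to Poisson summation and the non-negativity of $\widehat{\rho_s}=s^d\rho_{1/s}$, which is precisely the computation you carry out (periodize the shifted Gaussian, bound the resulting dual sum term-by-term using $|e^{2\pi i\inner{\vec{w},\vec{x}}}|\le 1$). Nothing to add.
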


\begin{corollary}\label{cor:massonzero}
For any lattice $\lat \subset \R^d$, $s>0$, and $\vec{x} \in \lat$,
\[
\Pr_{\vec{y} \sim D_{\lat^*,s}}[ \inner{\vec x,\vec y} = 0 ]
\ge 
\Pr_{y \sim D_{\Z / \|\vec x\|, s}}[y=0] 
= 
\rho_{s \|\vec x\|}(\Z)^{-1} 
\ge 
c \cdot \min(1,(s \|\vec x\|)^{-1}) \; .
\]
\end{corollary}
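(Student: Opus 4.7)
My plan is to treat the chain of three relations separately. The first inequality is the heart of the statement and rests on a fiber decomposition of $\lat^*$ along $\vec{x}$ combined with Lemma~\ref{lem:translations}; the middle equality is a direct rescaling of the Gaussian; and the final inequality is a short case analysis using the $d=1$ instance of Lemma~\ref{lem:origin}.

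For the first inequality, I would consider the linear functional $\phi:\lat^*\to\Z$ defined by $\phi(\vec{y})=\inner{\vec{x},\vec{y}}$ (which is $\Z$-valued because $\vec{x}\in\lat$). Its image is a subgroup of $\Z$, hence of the form $m\Z$ for some positive integer $m$ (the case $\vec{x}=0$ is trivial), and each nonempty fiber $F_t=\phi^{-1}(t)\cap\lat^*$ is a translate $\vec{y}_t+(\lat^*\cap\vec{x}^\perp)$ of the $(d-1)$-dimensional kernel lattice. Decomposing each $\vec{y}\in F_t$ into its projection $\vec{y}'$ onto $\vec{x}^\perp$ and its $(\vec{x}/\|\vec{x}\|)$-component, whose squared norm equals $t^2/\|\vec{x}\|^2$, factorizes the Gaussian weight as
\[
\rho_s(F_t) \;=\; \exp\bigl(-\pi t^2/(s\|\vec{x}\|)^2\bigr)\cdot\rho_s\bigl(\vec{y}_t'+\lat^*\cap\vec{x}^\perp\bigr).
\]
Applying Lemma~\ref{lem:translations} to the $(d-1)$-dimensional lattice $\lat^*\cap\vec{x}^\perp$ bounds the second factor by $\rho_s(\lat^*\cap\vec{x}^\perp)$, and summing over $t\in m\Z\subseteq\Z$ yields
\[
\rho_s(\lat^*)\;\le\;\rho_s(\lat^*\cap\vec{x}^\perp)\cdot\rho_{s\|\vec{x}\|}(\Z),
\]
which rearranges to the stated lower bound on $\Pr[\inner{\vec{x},\vec{y}}=0]$.

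For the middle equality, a rescaling of the exponent gives $\rho_{s\|\vec{x}\|}(\Z)=\rho_s(\Z/\|\vec{x}\|)$, so the probability that $y=0$ under $D_{\Z/\|\vec{x}\|,s}$ is exactly $1/\rho_{s\|\vec{x}\|}(\Z)$. For the final inequality, set $\sigma=s\|\vec{x}\|$. If $\sigma\le 1$, then termwise $\exp(-\pi n^2/\sigma^2)\le\exp(-\pi n^2)$, hence $\rho_\sigma(\Z)\le\rho_1(\Z)$. If $\sigma>1$, the $d=1$ case of Lemma~\ref{lem:origin} gives $\rho_\sigma(\Z)=\sigma\rho_{1/\sigma}(\Z)\le\sigma\rho_1(\Z)$. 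In either case $\rho_\sigma(\Z)^{-1}\ge\rho_1(\Z)^{-1}\min(1,1/\sigma)$, so one may take $c=1/\rho_1(\Z)$.

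The only step that requires genuine thought, rather than being essentially automatic, is setting up the fiber argument: identifying the image of $\phi$ as a sublattice $m\Z$ of $\Z$ (so the sum over $t\in m\Z$ can be enlarged to a sum over $\Z$), recognizing each fiber as a \emph{translate} of $\lat^*\cap\vec{x}^\perp$ (so that Lemma~\ref{lem:translations} applies), and correctly factoring the Gaussian weight along the $\vec{x}$-direction. Once this structure is in place, both the middle equality and the 1D estimate are routine.
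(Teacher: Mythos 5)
Your proposal is correct and follows essentially the same route as the paper: both decompose $\lat^*$ into the fibers $\{\vec{y}:\inner{\vec{x},\vec{y}}=i\}$, each a translate of $\lat^*\cap\vec{x}^\perp$ at distance $i/\|\vec{x}\|$ from the origin, factor the Gaussian weight along $\vec{x}$, and bound the fiber mass via Lemma~\ref{lem:translations}. Your write-up merely makes explicit what the paper leaves implicit (the image of $\phi$ being $m\Z$, and the one-dimensional estimate via the $d=1$ case of Lemma~\ref{lem:origin}).
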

\begin{proof} 
We start with the first inequality. 
Clearly, $\inner{\vec x,\vec y}$ takes integer values. 
For any $i \in \Z$, 
the set of points $\vec{y}$ in $\lat^*$ with $\inner{\vec x,\vec y} = i$ is
either empty or a translation of $\lat^* \cap \vec{x}^\perp$ whose affine span is at distance
$i / \|\vec x\|$ from the origin. The $\rho_s$ mass of this set is obviously zero in the
former case and at most $\rho_s(i / \|\vec x\|) \rho_s(\lat^* \cap \vec{x}^\perp)$ in the latter
by Lemma~\ref{lem:translations} and the product property of $\rho_s$. The inequality follows.
The last inequality is an easy calculation. 
\end{proof}

%
%We now combine Corollary~\ref{cor:massonzero} with Lemma~\ref{lem:origin}. 
%
%\begin{corollary}\label{cor:massonzerononzero}
%For any lattice $\lat \subset \R^d$, $s>0$, and $\vec{x} \in \lat$,
%\[
%\Pr_{\vec{y} \sim D_{\lat^*,s}}[ \inner{\vec x,\vec y} = 0 \text{~and~} \vec y \neq 0]
%\ge 
%c \cdot \min(1,(s \|\vec x\|)^{-1}) - (\det \lat)^{-1} \cdot s^{-d}\; .
%\]
%\end{corollary}

\section{Application to Koldobsky's question}

\begin{theorem}\label{thm:main}
Let $K \subset \R^d$ be an origin-symmetric convex body.
Then there exists a vector $\vec{y} \in \Z^d$ such that 
\begin{align*}
|K \cap \Z^d \cap \vec{y}^\perp| / |K \cap \Z^d|
\ge 
\min(1, c \cdot d^{-1} \cdot \vol(K)^{-1/(d-1)}) \; . %\label{eq:koldmain}
\end{align*}
\end{theorem}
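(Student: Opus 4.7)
The plan is to combine the Gaussian machinery of Section~2 with John's theorem. First, one may assume $\spn(\Z^d\cap K)=\R^d$; otherwise any nonzero integer vector orthogonal to this span already gives ratio $1$. Under this assumption, John's theorem for origin-symmetric convex bodies supplies an ellipsoid $E=T\ball$ with $T\in\mathrm{GL}_d(\R)$ and $E\subset K\subset \sqrt{d}\,E$. The problem is invariant under the substitution $\vec{x}\mapsto T^{-1}\vec{x}$, which replaces $\Z^d$ by the lattice $\lat:=T^{-1}\Z^d$, sends $K$ to $K':=T^{-1}K\subset \sqrt{d}\,\ball$, and identifies any integer vector $\vec{y}\in\Z^d$ with $\vec{y}':=T^T\vec{y}\in\lat^*$ in a way that preserves both $|\lat\cap K'\cap {\vec{y}'}^\perp|$ and $|\lat\cap K'|$. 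The advantage is that $K'$ now sits in a ball of radius only $\sqrt{d}$, while $\det\lat=\vol(\ball)/\vol(E)$.

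The heart of the argument is to sample $\vec{y}'\sim D_{\lat^*,s}$ for a scale $s>0$ to be chosen. Since every $\vec{x}'\in\lat\cap K'$ satisfies $\|\vec{x}'\|\le \sqrt{d}$, Corollary~\ref{cor:massonzero} (applied with $\lat$ in the role of the dual lattice there) yields
\[
\Pr\bigl[\inner{\vec{x}',\vec{y}'}=0\bigr]\;\ge\; c\min\bigl(1,(s\sqrt{d})^{-1}\bigr),
\]
while Lemma~\ref{lem:origin} gives
\[
\Pr[\vec{y}'=\vec{0}]\;\le\;(\det\lat^*)\,s^{-d}\;=\;s^{-d}\vol(E)/\vol(\ball).
\]
Summing the first estimate over $\vec{x}'\in\lat\cap K'$ and subtracting off the contribution of $\vec{y}'=\vec{0}$, the probabilistic method produces a nonzero $\vec{y}'\in\lat^*$ satisfying
\[
\frac{|\lat\cap K'\cap {\vec{y}'}^\perp|}{|\lat\cap K'|}\;\ge\; c\min\bigl(1,(s\sqrt{d})^{-1}\bigr)-\frac{\vol(E)}{s^d\vol(\ball)}.
\]

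It remains to optimize $s$ so as to balance these two terms. In the main regime the near-optimal choice $s\sim (d^{3/2}\vol(E)/\vol(\ball))^{1/(d-1)}$ automatically satisfies $s\sqrt{d}\ge 1$ (this holds whenever $\vol(K)\gtrsim \vol(\ball)/d$) and yields a ratio of order $\vol(\ball)^{1/(d-1)}/\bigl(\sqrt{d}\,\vol(E)^{1/(d-1)}\bigr)$. Combining the Stirling estimate $\vol(\ball)^{1/(d-1)}=\Omega(1/\sqrt{d})$ with $\vol(E)\le \vol(K)$ then gives the desired $\Omega\bigl(1/(d\,\vol(K)^{1/(d-1)})\bigr)$: conceptually, one factor of $1/\sqrt{d}$ comes from the $\sqrt{d}$ slack in John's inclusion and the other from Stirling. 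The main technical obstacle I anticipate is a clean treatment of the residual regime $\vol(K)\lesssim \vol(\ball)/d$, where the optimum of $s$ falls below $1/\sqrt{d}$. There the theorem's right-hand side is already of constant order, so it should suffice to take $s$ at the boundary value $1/\sqrt{d}$ and use the assumption $\spn(\Z^d\cap K)=\R^d$ --- which forces $\vol(K)\ge 2^d/d!$ --- to control the ratio $\vol(E)/\vol(\ball)$.
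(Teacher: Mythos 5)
Your proposal follows the same core strategy as the paper's proof: sample $\vec{y}$ from the discrete Gaussian $D_{\lat^*,s}$ on the dual lattice, lower-bound the per-point probability of $\inner{\vec{x},\vec{y}}=0$ via Corollary~\ref{cor:massonzero}, control $\Pr[\vec{y}=\vec{0}]$ via Lemma~\ref{lem:origin}, and reduce to a good position of $K$ via John's theorem. The difference lies in which John position you use. The paper applies a \emph{determinant-one} linear map putting $K$ in a position of circumradius at most $d\cdot\vol(K)^{1/d}$; this keeps $\det\lat=1$, ties the degenerate case directly to $\vol(K)<d^{-d}$ (circumradius below $1$ forces $K\cap\lat$ to be non-full-dimensional), and lets the single choice $s=C\vol(K)^{1/(d(d-1))}\ge1$ work with no case analysis. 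Your normalization ($K'\subset\sqrt{d}\,\ball$, $\det\lat=\vol(\ball)/\vol(E)$) is also workable, and your main-regime computation is correct, but it forces the split on whether the optimizing $s$ exceeds $1/\sqrt{d}$ --- and precisely there your sketch has a logical slip: to make $\Pr[\vec{y}'=\vec{0}]\le s^{-d}\vol(E)/\vol(\ball)$ small at $s=1/\sqrt{d}$ you need an \emph{upper} bound on $\vol(E)/\vol(\ball)$, whereas the span assumption (which gives $\vol(K)\ge 2^d/d!$) only supplies a \emph{lower} bound, so it cannot play the role you assign to it. Fortunately the residual regime is self-correcting: if you define it by the failure of $s_{\mathrm{opt}}\sqrt{d}\ge 1$, i.e.\ by $\vol(E)/\vol(\ball)\lesssim d^{-(d+2)/2}$, then at $s=1/\sqrt{d}$ the defining inequality itself gives $s^{-d}\vol(E)/\vol(\ball)\lesssim 1/d$, while the per-point probability is a constant. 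The span assumption is still needed, but for a different purpose: it caps $d^{-1}\vol(K)^{-1/(d-1)}$ by an absolute constant (since $(d!/2^d)^{1/(d-1)}\le 2d$), so that achieving a constant fraction in the residual regime actually meets the theorem's bound once the constant $c$ in the statement is taken small enough. With that repair your argument goes through; the paper's normalization simply packages these cases into one.
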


We note that this bound improves on that of Alexander et al.~\cite{AlexanderHZ15} for any body whose volume is at most $c^{d^3}$.

\begin{proof}
By John's theorem (see, e.g.,~\cite{Ball97}), there exists a linear transformation $T$ of determinant $1$
so that $TK$ has circumradius at most $d \cdot \vol(K)^{1/d}$. Therefore, by considering
the body $TK$ and the determinant $1$ lattice $T\Z^d$, it suffices to prove the following: 
for any origin-symmetric convex body $K \subset \R^d$ with circumradius
at most $d \cdot \vol(K)^{1/d}$,
and any lattice $\lat \subset \R^d$, 
there exists a nonzero vector $\vec{y} \in \lat^*$ such that 
\begin{align*}
|K \cap \lat \cap \vec{y}^\perp| / |K \cap \lat|
\ge 
\min(1, c \cdot d^{-1} \cdot \vol(K)^{-1/(d-1)}) \; . 
\end{align*}

Notice that if $\vol(K) < d^{-d}$ then the circumradius of $K$ is less than $1$, 
in which case $\lat \cap K$ is not full dimensional (by Hadamard's inequality and $\det \lat = 1$).
As a result, we can choose a vector $\vec{y}$ 
so that $K \cap \lat \subset \vec{y}^\perp$, making the quotient above $1$.
We therefore assume from now on that $\vol(K) \ge d^{-d}$. 

We will use a simple application of the probabilistic method. 
Namely, let us choose $\vec{y}$ from the distribution 
$D_{\lat^*,s}$ where $s=C \vol(K)^{1/(d(d-1))} \ge 1$
for a large enough absolute constant $C>0$.
Then, by Corollary~\ref{cor:massonzero}, for any 
fixed $\vec{x} \in K \cap \lat$, the probability
that $\inner{\vec{x},\vec{y}}=0$ is at least 
\[
c \cdot (s d \cdot \vol(K)^{1/d})^{-1} =
c \cdot d^{-1} \cdot \vol(K)^{-1/(d-1)} \;. 
\]
Let $p$ denote the latter quantity. 
It follows that the expected fraction of vectors $\vec{x}$ in 
$K \cap \lat$ satisfying $\inner{\vec{x},\vec{y}}=0$
is at least $p$. 
Moreover, by Markov's inequality, with probability at least
$p/2$ over the choice of $\vec{y}$, the fraction of vectors
$\vec{x}$ in $K \cap \lat$ satisfying $\inner{\vec{x},\vec{y}}=0$
is at least $p/2$. To complete the proof, notice
by Lemma~\ref{lem:origin} that the probability
that $\vec{y}=0$ is $s^{-d} < p/2$. Therefore,
there is a positive probability over the choice of $\vec{y}$
that $\vec{y}\neq 0$ and moreover, 
that the fraction of vectors
$\vec{x}$ in $K \cap \lat$ satisfying $\inner{\vec{x},\vec{y}}=0$
is at least $p/2$. This completes the proof. 
\end{proof}

\bibliographystyle{alpha}
\bibliography{koldobsky}

\begin{thebibliography}{AHZ15}

\bibitem[AHZ15]{AlexanderHZ15}
Matthew Alexander, Martin Henk, and Artem Zvavitch.
\newblock A discrete version of {K}oldobsky's slicing inequality, 2015.
\newblock Available at \url{http://arxiv.org/abs/1511.02702}.

\bibitem[Bal97]{Ball97}
Keith Ball.
\newblock An elementary introduction to modern convex geometry.
\newblock In {\em Flavors of geometry}, volume~31 of {\em Math. Sci. Res. Inst.
  Publ.}, pages 1--58. Cambridge Univ. Press, Cambridge, 1997.

\bibitem[Ban93]{banaszczyk}
Wojciech Banaszczyk.
\newblock New bounds in some transference theorems in the geometry of numbers.
\newblock {\em Mathematische Annalen}, 296(4):625--635, 1993.

\end{thebibliography}

\end{document}